\documentclass[11pt]{amsart}
\usepackage{amsmath, amsthm, amsfonts, amssymb, amscd}
\usepackage{marginnote, todonotes}
\usepackage[colorlinks=true, citecolor=blue, linkcolor=red,pagebackref, hyperindex]{hyperref}
\usepackage{fullpage}

\usepackage{amsxtra}     
\usepackage{epsfig}
\usepackage{verbatim}
\usepackage[all,cmtip]{xy}
\usepackage{mathrsfs}
\usepackage{enumerate}

\newcommand{\ZZ}{{\mathbb Z}}

\newcommand{\m}{{\mathfrak m}}





\newcommand{\Tor}[4]{\operatorname{Tor}_{#1}^{#2}(#3,#4)}

\newcommand{\depth}{\operatorname{depth}}

\theoremstyle{plain}
\newtheorem{theorem}{Theorem}[section]
\newtheorem{corollary}[theorem]{Corollary}
\newtheorem{lemma}[theorem]{Lemma}

\newtheorem{proposition}[theorem]{Proposition}

\theoremstyle{definition}

\newtheorem{remark}[theorem]{Remark}
\newtheorem{definition}[theorem]{Definition}

\newtheorem{example}[theorem]{Example}
\newtheorem*{acknowledgement}{Acknowledgments}

\numberwithin{equation}{theorem}

\theoremstyle{remark}

\newtheorem{question}[theorem]{Question}

\renewcommand{\geq}{\geqslant}

\renewcommand{\leq}{\leqslant}


\begin{document}

\title{On monomial Golod ideals}

\author[H. Dao]{Hailong Dao}
\address{Department of Mathematics, The University of Kansas, Lawrence, KS 66045,  U.S.A.}
\email{hdao@ku.edu}
\urladdr{}

\author[A. De Stefani]{Alessandro De Stefani}
\address{Department of Mathematics, University of Nebraska, 203 Avery Hall, Lincoln, NE 68588}
\email{adestefani2@unl.edu}

\date{\today} 
\subjclass[2010]{Primary 13A02; Secondary 13D40}

\keywords{Golod rings; product of ideals; Koszul homology; Koszul cycles}

\begin{abstract}
We study ideal-theoretic conditions for a monomial ideal to be Golod. For ideals in a polynomial ring in three variables, our criteria give a complete characterization. Over such rings, we show that the product of two monomial ideals is Golod. 
\end{abstract}

\maketitle

\thispagestyle{empty}


\section{Introduction}
\label{sec:Introduction}
Let $k$ be a field, and $Q=k[x_1,\dots, x_n]$ be a polynomial ring on $n$ variables over $k$, with $\deg(x_i)=1$ for all $i$. We denote by $\m = (x_1,\dots,x_n)$ the homogeneous maximal ideal of $Q$. Let $I \subseteq \m^2$ be a homogeneous ideal and $R=Q/I$. Serre proved a coefficient-wise inequality of formal power series for the Poincare series of $R$:
$$P^R_k(t):= \sum_{i\geq 0} \dim_k \Tor iRkk t^i\ll  \frac{(1+t)^n}{1-\sum_{i>0}\dim_k \Tor iQkR t^{i+1}} $$

When equality happens, the ring $R$ (and the ideal $I$) are called Golod. The notion is defined and studied extensively in the local setting, but in this paper we shall restrict ourselves to the graded situation.  Golod rings and ideals have attracted increasing attention recently, see \cite{CV, HH, D, Fr, Ka}, but they remain mysterious even when $n=3$. For instance, we do not know if the product of any two homogeneous ideals in $Q=k[x,y,z]$ is Golod. Another reason for the increasing interest is their connection to moment-angle complexes, for example see \cite{DS,IK,GPTW}.

It was asked by Welker whether it is always the case that the product of two proper homogenous ideals is Golod  (for example, see \cite[Problem 6.18]{MP}) but a counter-example, even for monomial ideals, was constructed by the second author in \cite{D}. 

In this work we provide a concrete characterization of Golod monomial ideals in three variables, and use it to show that the product of any two proper monomial ideals in $Q=k[x,y,z]$ is Golod. The following are our main results:

\begin{theorem}\label{mainT}
Let $Q=k[x,y,z]$ and $I\subseteq \m^2$ be a monomial ideal. Then $I$ is Golod if and only if the following conditions hold:
\begin{enumerate}
\item $[I:x_1]\cdot [I:(x_2,x_3)] \subseteq I$ for all permutations $\{x_1,x_2,x_3\}$ of $\{x,y,z\}$.
\item $[I:x_1] \cdot [I:x_2]\subseteq x_3[I:(x_1,x_2)]+I$ for all permutations $\{x_1,x_2,x_3\}$ of $\{x,y,z\}$.
\end{enumerate}
\end{theorem}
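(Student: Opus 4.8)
The plan is to work with Koszul homology and the criterion that $R=Q/I$ is Golod if and only if the product of any two elements in the augmentation ideal of the Koszul homology algebra $\HH{\bullet}{R}$ vanishes, together with the vanishing of all Massey products. In the special case $n=3$, the Koszul complex has length three, so the Koszul homology algebra is concentrated in degrees $1,2,3$; the only products that can be nonzero involve $\HH1{R}\cdot\HH1{R}\to\HH2{R}$ and $\HH1{R}\cdot\HH2{R}\to\HH3{R}$, and all higher Massey products automatically land in degree $\ge 4$ and hence vanish. So in three variables Golodness is equivalent to the vanishing of these two products. The bulk of the work is to translate these two vanishing conditions into the stated ideal-theoretic conditions (1) and (2).

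\textbf{Setting up the dictionary.} First I would fix the Koszul complex $K_\bullet = K_\bullet(x,y,z;Q)$ with exterior generators $e_x,e_y,e_z$ and describe explicit cycles. For a monomial ideal $I$, the colon ideals $[I:x_i]$, $[I:(x_i,x_j)]$ control exactly which monomials $m$ give rise to Koszul cycles of the form $m\,e_i$, $m(x_j e_i - x_i e_j)$, etc. Concretely, $\partial(m e_i) = m x_i$, so $m e_i$ is a cycle in $K_\bullet(R)$ iff $m x_i \in I$, i.e. $m \in [I:x_i]$; similarly a degree-two chain $m(x_i e_j\wedge e_k - \cdots)$ is a cycle iff its boundary lies in $I$, which unwinds to a membership condition in $[I:(x_i,x_j)]$ or sums thereof. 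The key structural input — which I would either cite from the earlier part of the paper or prove as a lemma — is that $\HH1{R}$ and $\HH2{R}$ are generated by such "monomial" cycles, so it suffices to check the two products on these generators. This is where the combinatorial nature of monomial ideals is essential.

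\textbf{The two products.} For the product $\HH1{R}\cdot\HH1{R}\to\HH2{R}$: a generic pair of monomial $1$-cycles $a\,e_i$ and $b\,e_j$ (with $a\in[I:x_i]$, $b\in[I:x_j]$) has product $ab\, e_i\wedge e_j$, which is automatically a $2$-cycle; it is a boundary precisely when $ab \in x_k[I:(x_i,x_j)] + I$ (the boundary of a general $2$-chain supported on $e_i\wedge e_j$ and the other wedges, after clearing, produces exactly this submodule). Ranging over all $a,b$ and all $i\ne j$ gives condition (2). For the product $\HH1{R}\cdot\HH2{R}\to\HH3{R}$: pairing $a\,e_i$ with a $2$-cycle coming from $[I:(x_j,x_k)]$ lands in the top degree $K_3\cong Q$, where being a boundary is a strong constraint, and this unwinds to the containment $[I:x_i]\cdot[I:(x_j,x_k)]\subseteq I$, i.e. condition (1). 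I would organize this as: (i) show Golod $\Rightarrow$ (1) and (2) by evaluating the products on these explicit generators; (ii) show (1) and (2) $\Rightarrow$ all products of positive-degree Koszul homology classes vanish, hence (since Massey products are vacuous in this range) $R$ is Golod.

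\textbf{Main obstacle.} The delicate point is the reverse direction, and specifically proving that $\HH1{R}$ and $\HH2{R}$ are \emph{generated} by monomial cycles of the expected shape, and that it suffices to test the products on generators — a priori the product of two homology classes depends only on the classes, but to convert "the product vanishes on monomial generators" into "the product vanishes identically" one needs bilinearity plus the generation statement, and to convert "condition (1)/(2) holds" into "the product of these specific cycles is a boundary" one must exhibit the bounding chain explicitly and check no cross-terms obstruct it. Handling the $2$-cycles is the harder of the two, since $\HH2{R}$ for a monomial ideal in three variables can have generators that are not single monomials times a basis wedge but rather combinations forced by syzygies among the generators of $I$; controlling these and showing the products still reduce to the colon-ideal conditions is the real content. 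I expect the argument to hinge on a careful case analysis of the possible monomial supports of degree-two Koszul cycles, using that $Q$ has only three variables to keep the casework finite.
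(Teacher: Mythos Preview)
Your overall strategy matches the paper's: in three variables Golodness is equivalent to trivial multiplication on Koszul homology (Massey products would live in homological degree $\ge 4$ and so vanish), and conditions (1) and (2) are exactly the vanishing of $\HH1{R}\cdot\HH2{R}\to\HH3{R}$ and $\HH1{R}\cdot\HH1{R}\to\HH2{R}$ evaluated on monomial generators. You also correctly isolate the crux as proving that $\HH2{R}$ admits a basis of monomial cycles $\overline{w}\,e_{x_ix_j}$ with $w\in I:(x_i,x_j)$.

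Where you and the paper diverge is in how to obtain that basis. You anticipate a direct case analysis of $\ZZ^3$-graded $2$-cycles, worrying that cycles may be genuine combinations of several wedge terms. The paper instead works through the double complex $F_\bullet\otimes K^Q$, where $F_\bullet$ is the minimal $\ZZ^3$-graded free resolution of $R$, and traces basis elements of $\Tor 2Q{Q/I}k$ down to explicit Koszul cycles. The observation that short-circuits any case analysis is that each column of the second differential $\delta_2$ in a minimal resolution of a monomial ideal has exactly two nonzero monomial entries (every minimal syzygy among distinct monomial generators is binomial, $m_iu_i-m_ju_j=0$ with $u_i,u_j$ monomials divisible by distinct variables); lifting such a column through the double complex produces a single term $\overline{w}\,e_{x_ix_j}$ with $w\in I:(x_i,x_j)$. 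So the monomial basis for $\HH2{R}$ falls out of the structure of the resolution rather than from analysing cycles directly. Your proposed case analysis might succeed, but the resolution-lifting argument is shorter, more conceptual, and explains transparently why three variables is special.
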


To obtain Theorem \ref{mainT}, we first write down a set of necessary conditions for Golodness for general ideals in all dimensions that are easy to check and are probably of independent interest in Proposition \ref{nec}. They can be used to provide quick examples of non-Golod ideals. 

As a consequence, we obtain that products of monomial ideals in three variables are Golod.
\begin{corollary}\label{product}
Let $J,K$ be proper monomial ideals in $Q=k[x,y,z]$. Then $I=JK$ is Golod. 
\end{corollary}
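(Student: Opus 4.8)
The plan is to deduce Corollary~\ref{product} from Theorem~\ref{mainT} by verifying, for $I = JK$ with $J,K$ proper monomial ideals in $Q = k[x,y,z]$, the two families of colon-containment conditions (1) and (2). The key algebraic input is a set of formulas for colon ideals of a product: for any monomial ideals $J,K$ and any subset of variables, one has $JK : x_1 \supseteq (J:x_1)K + J(K:x_1)$, and similarly $JK : (x_1,x_2) \supseteq (J:(x_1,x_2))K + (J:x_1)(K:x_2) + (J:x_2)(K:x_1) + J(K:(x_1,x_2))$. For monomial ideals these inclusions are in fact equalities, which can be checked monomial-by-monomial: a monomial $m$ lies in $JK:x_1$ iff $x_1 m = a b$ with $a \in J$, $b \in K$ monomials, and one then distributes the factor $x_1$ onto $a$ or $b$ according to which of them it divides in the factorization (with a small case analysis when $x_1^2 \mid x_1 m$). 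I would isolate this computation as a preliminary lemma, since it is the combinatorial heart of the argument and is reused for both conditions.

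Granting these colon formulas, condition (1) of Theorem~\ref{mainT} becomes the assertion that $(JK:x_1)\cdot(JK:(x_2,x_3)) \subseteq JK$. Expanding both factors via the formulas, every term of the product is of the form $A_J A_K \cdot B_J B_K$ where $A_J$ is a colon of $J$ (by $x_1$, or trivial) times possibly $K$, and likewise for the others; one checks that in each resulting product, after regrouping, one collects a factor landing in $J$ and a factor landing in $K$, hence the product lies in $JK$. The crucial sub-fact is the one-variable, coprime-support statement: for a single monomial ideal $L$ in $k[x,y,z]$, $(L:x_1)\cdot(L:(x_2,x_3)) \subseteq L$ — but this is exactly condition (1) applied to $L$, which we do \emph{not} get for free. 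So instead the verification must be purely formal from the product structure: writing $JK:x_1 = (J:x_1)K + J(K:x_1)$ and $JK:(x_2,x_3) = (J:(x_2,x_3))K + \dots + J(K:(x_2,x_3))$ and multiplying, each of the (up to eight) cross terms visibly has the shape $(\text{ideal})\cdot J \cdot (\text{ideal}) \cdot K \subseteq JK$ once one uses that colons only shrink ideals, i.e. $J:S \subseteq J$ is false in general but $J \cdot (K:S) \subseteq JK$ and $(J:S)\cdot K \cdot (\text{anything} \subseteq Q) $ needs care — the honest statement is that every term contains at least one ``$J$-factor'' that is either $J$ itself or a colon of $J$ multiplied into a genuine $J$-piece. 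I would write this out carefully as the main lemma of the proof.

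For condition (2) I would argue the same way: expand $JK:x_1$ and $JK:x_2$ by the product formula, multiply, and show each cross term lies in $x_3(JK:(x_1,x_2)) + JK$. Here the term $J(K:x_1)\cdot J(K:x_2)$ and its siblings are handled by first reducing to the corresponding one-ideal statement — for a monomial ideal $L$, $(L:x_1)(L:x_2) \subseteq x_3(L:(x_1,x_2)) + L$ — which again is condition (2) for $L$ and hence not automatic, so once more the reduction must be formal: a monomial $a b$ with $a \in K:x_1$, $b \in K:x_2$ and both multiplied by elements of $J$; if $x_3$ divides the relevant piece we are in the first summand, and otherwise degree/support reasons in three variables force $ab \in$ (a colon landing back in $JK$). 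This is the step I expect to be the main obstacle: making the three-variable pigeonhole (only one variable left after colonizing by two) interact correctly with the bilinear product expansion, so that every one of the several cross terms is accounted for without a hidden appeal to the unknown Golodness of $J$ or $K$ individually.

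Finally I would assemble the pieces: by Theorem~\ref{mainT}, $I = JK$ is Golod once (1) and (2) hold for all three permutations of $\{x,y,z\}$; by the symmetry of the construction in $J$ and $K$ and in the variables, it suffices to treat one permutation, which the two lemmas above do. One should also dispatch the trivial reductions at the outset — if $J$ or $K$ is not contained in $\m^2$ (e.g. contains a variable) or is the unit ideal, either the statement is vacuous or $I$ reduces to a smaller product — and note that $I = JK \subseteq \m^2$ automatically when $J,K$ are proper, so Theorem~\ref{mainT} applies. I expect the total write-up to be short modulo the colon-of-a-product lemma and the two containment lemmas, with all genuine content concentrated there.
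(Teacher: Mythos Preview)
Your overall strategy---reduce to Theorem~\ref{mainT} and expand via $JK:x = (J:x)K + J(K:x)$---matches the paper's, but there is a concrete error and the key simplifications are missing. The formula you propose for $JK:(x_1,x_2)$ is false even as a containment: with $J=(x_1)$, $K=(x_2)$ one has $(J:x_1)(K:x_2)=Q$ while $JK:(x_1,x_2)=(x_1x_2)$, so the cross terms do not land where you claim and your plan for condition~(1) cannot proceed as written. The paper avoids expanding $JK:(y,z)$ altogether: given monomials $f\in JK:x$ and $g\in JK:(y,z)$, it splits on whether $f\in(y,z)$ (then $fg\in(y,z)g\subseteq JK$ immediately) or $f=x^a$ (then $x^{a+1}\in JK$ with $J,K$ proper forces $f\in J\cap K$, and $g\in JK:y\subseteq (J:y)K+J(K:y)\subseteq J+K$ finishes).

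For condition~(2) the paper does expand $(JK:x)(JK:y)$ into four terms as you suggest; two of them already contain a $JK$ factor, and by symmetry one is left with $J^2(K:x)(K:y)$. This is dispatched by two moves you have not identified: the elementary containment $(K:x)(K:y)\subseteq K:(x,y)$, valid for \emph{any} ideal $K$, and then the decomposition $J=J_1+zJ'$ with $J_1$ generated by $J\cap k[x,y]$, so that $J_1\subseteq(x,y)$. One gets $J_1J[K:(x,y)]\subseteq(x,y)J[K:(x,y)]\subseteq JK$ and $zJ'J[K:(x,y)]\subseteq zJ[K:(x,y)]\subseteq z[JK:(x,y)]$. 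Your proposal gestures toward a case split on divisibility by $x_3$, but without first collapsing $(K:x)(K:y)$ into $K:(x,y)$ the ``main obstacle'' you anticipate does not resolve; in particular, the reduction you sketch to a one-ideal statement $(L:x_1)(L:x_2)\subseteq x_3(L:(x_1,x_2))+L$ is exactly condition~(2) for $L$ and is \emph{false} in general (e.g.\ $L=(x^2,y^2)$), so that route is a dead end.
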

Observe that this result is optimal, as the example of a non-Golod product of two monomial ideals constructed in \cite{D} is in four variables. We end the paper with some positive and negative partial results regarding the colon conditions highlighted by this work, and several open questions.

\begin{acknowledgement}
We thank Van Nguyen and Oana Veliche for many inspiring conversations about the content of this paper. The first author is partially supported by  NSA grant H98230-16-1-001 during the preparation of this work. 
\end{acknowledgement}

\section{Characterization of monomial Golod ideals in three variables}
\label{main}

In this section we prove Theorem \ref{mainT}. We first focus on the necessary part, which holds quite generally. Let $Q=k[x_1,\ldots,x_n]$, $I$ be a homogeneous ideal in $Q$, and $R=Q/I$. Let $K^Q$ be the Koszul complex on a minimal set of generators $x_1,\dots,x_n$ of the maximal ideal $\m$ of $Q$, and $K^R=R\otimes_QK^Q$. The Koszul complex can be realized as the exterior algebra $\bigwedge K_1^R$, where $K_1^R$ is a free $R$-module of rank $n$, with basis $e_{x_1},\ldots,e_{x_n}$. An element of the $p$-th graded component $K_p^R$ can be written as a sum of elements of the form $r e_{x_{i_1}\ldots x_{i_p}}$, where $1 \leq i_1 < i_1 < \ldots < i_p \leq n$, $r \in R$, and where we set $e_{x_{i_1}\ldots x_{i_p}}:=e_{x_{i_1}} \wedge \ldots \wedge e_{x_{i_p}}$. The Koszul complex also comes equipped with a differential $\partial$, as it is a DG algebra. The differential is such that $\partial(e_{x_{i_1}\ldots x_{i_p}}) = \sum_{j=1}^p (-1)^{j-1} x_{i_j} e_{x_{i_1}\ldots \widehat{x_{i_j}} \ldots x_{i_p}}$, and extended by linearity to $K_p^R$.
It is well-known that, if $I$ is Golod, then the product on the Koszul homology $H_{\geq 1}(K^R)$ is trivial (for example, see \cite[Remark 5.2.1]{Av}). In other words, $R$ is Golod only if the map $H_j(K^R)\times H_{i-j}(K^R) \to H_i(K^R)$ is zero for all $1 \leq j\leq i\leq n$. 
 
 \begin{proposition}\label{nec}
Let $Q=k[x_1,\ldots,x_n]$, and $I\subseteq \m^2$ be a homogeneous ideal such that $R=Q/I$ is Golod. Then the following hold:
 \begin{enumerate}
 \item For any $1\leq p\leq n$, we have $$[I:(x_1,\dots, x_p)][I:(x_{p+1},\dots, x_n)] \subseteq I.$$
 \item  For any $1\leq p\leq n-1$, we have $$[I:(x_1,\dots, x_p)][I:(x_{p+1},\dots, x_{n-1})] \subseteq x_n[I:(x_1,\dots,x_{n-1})]+I.$$
 \end{enumerate}
 \end{proposition}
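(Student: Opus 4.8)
The plan is to exploit the criterion recalled just before the statement: if $R=Q/I$ is Golod, then the Koszul homology algebra $H_{\geq 1}(K^R)$ has trivial multiplication. So I would translate the containments in (1) and (2) into statements about products of specific Koszul cycles, and then force those products to be boundaries because the corresponding homology classes multiply to zero.

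\smallskip
\noindent\textbf{Setting up the cycles.} For part (1), fix $f \in [I:(x_1,\dots,x_p)]$ and $g \in [I:(x_{p+1},\dots,x_n)]$. The point is that $x_1 f,\dots,x_p f \in I$, so in $R=Q/I$ the element $f e_{x_1\cdots x_p} \in K_p^R$ is a cycle: indeed $\partial(f e_{x_1\cdots x_p}) = \sum_{j=1}^p (-1)^{j-1}(x_j f) e_{x_1\cdots \widehat{x_j}\cdots x_p} = 0$ in $K^R$. Symmetrically, $g e_{x_{p+1}\cdots x_n}$ is a cycle in $K_{n-p}^R$. Their product in the exterior algebra is $\pm fg\, e_{x_1\cdots x_n} \in K_n^R$. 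Since $K_n^R$ is the top of the Koszul complex, a class in $H_n(K^R)$ is a boundary only if it is zero; but Golodness forces the product of the two homology classes to vanish, i.e.\ $fg\, e_{x_1\cdots x_n}$ is a boundary, hence $fg\, e_{x_1\cdots x_n} = 0$ in $K_n^R$, which exactly says $fg \in I$. This gives (1). (One should check that when $f\in I$ or $g\in I$ the statement is trivially true, so we may implicitly assume the classes are the relevant cycles; the argument above works regardless since the conclusion $fg \in I$ is what we want.)

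\smallskip
\noindent\textbf{Part (2).} Now take $f \in [I:(x_1,\dots,x_p)]$ and $g \in [I:(x_{p+1},\dots,x_{n-1})]$, with $1\le p\le n-1$. As before $f e_{x_1\cdots x_p}$ is a cycle in $K_p^R$, and $g e_{x_{p+1}\cdots x_{n-1}}$ is a cycle in $K_{n-1-p}^R$. Their exterior product is $\pm fg\, e_{x_1\cdots x_{n-1}} \in K_{n-1}^R$, a cycle representing a homology class in $H_{n-1}(K^R)$ that must be zero by Golodness, hence $fg\, e_{x_1\cdots x_{n-1}}$ is a boundary. The boundaries into $K_{n-1}^R$ come from $K_n^R = R\cdot e_{x_1\cdots x_n}$, and $\partial(h e_{x_1\cdots x_n}) = \sum_{j=1}^n (-1)^{j-1} x_j h\, e_{x_1\cdots\widehat{x_j}\cdots x_n}$. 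So $fg\, e_{x_1\cdots x_{n-1}}$ being a boundary means there is $h\in R$ with $(-1)^{n-1} x_n h = fg$ in $R$ in the $e_{x_1\cdots x_{n-1}}$ component, while the other components $x_j h$ (for $j<n$) must be $0$ in $R$, i.e.\ $x_j h \in I$ for $j=1,\dots,n-1$, so $h \in [I:(x_1,\dots,x_{n-1})]$ — after choosing a lift, say $\tilde h \in Q$. Reading the $e_{x_1\cdots x_{n-1}}$ component back in $Q$ gives $fg - (-1)^{n-1} x_n \tilde h \in I$ with $\tilde h \in [I:(x_1,\dots,x_{n-1})]$, which is precisely $fg \in x_n[I:(x_1,\dots,x_{n-1})] + I$. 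This yields (2).

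\smallskip
\noindent\textbf{Main obstacle.} The bookkeeping is straightforward; the one point that needs care is the passage in part (2) from "$fg\, e_{x_1\cdots x_{n-1}}$ is a boundary of some chain in $K_n^R$" to the containment, because one must simultaneously read off \emph{all} components of $\partial(h e_{x_1\cdots x_n})$: the $e_{x_1\cdots x_{n-1}}$ component gives the $x_n$-multiple, but the remaining $n-1$ components forcing $x_j h \in I$ for $j < n$ are exactly what puts $h$ into the colon ideal $[I:(x_1,\dots,x_{n-1})]$. I should also be careful about lifting from $R$ to $Q$ and about the signs, though signs are irrelevant to ideal membership. Finally, I'd remark that the degenerate cases $p=n$ in (1) (where one factor is $[I:\m]$ and the other is $[I:\emptyset]=Q$, but actually $p$ ranges so both sides are symmetric) and small $n$ are covered by the same computation, with the convention that $e$ of the empty index set is $1$.
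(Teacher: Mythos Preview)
Your proof is correct and follows essentially the same argument as the paper: construct the Koszul cycles $f\,e_{x_1\cdots x_p}$ and $g\,e_{x_{p+1}\cdots x_q}$ from elements of the colon ideals, use the vanishing of products in $H_{\ge 1}(K^R)$ forced by Golodness, and then read off what it means for $fg\,e_{x_1\cdots x_q}$ to be a boundary (trivially $fg\in I$ when $q=n$, and via the components of $\partial(h\,e_{x_1\cdots x_n})$ when $q=n-1$). Your write-up is in fact slightly more detailed than the paper's about signs and about lifting $h$ from $R$ to $Q$, but the strategy and the key computations are identical.
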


\begin{proof}
For (1), let $f\in I:(x_1,\dots, x_p)$ and $g \in I:(x_{p+1},\dots, x_n)$. Then, by definition, the element $f e_{x_1 \ldots x_p}$ is a cycle in $K^R_p$. Similarly for $g e_{x_{p+1} \ldots x_n}$. The product of these cycles is $0$ in $H_n(R)$  if and only if $fg=0$ in $R$, which precisely says that $fg \in I$.

Similarly, take $f\in I:(x_1,\dots, x_p)$ and $g\in I:(x_{p+1},\dots, x_{n-1})$. Consider the cycles $f e_{x_1 \ldots x_p}$ and $g e_{x_{p+1}\ldots x_{n-1}}$.  The product of these is zero in $H_{n-1}(R)$ if and only if there is $h \in R$ such that $\partial(h e_{x_1\ldots x_n}) = fg e_{x_1 \ldots x_{n-1}}$. But this means that $hx_i=0$ for $1\leq i<n$ and $hx_n=fg$ in $R$. Lifting to $Q$, this shows that $h \in I:(x_1,\ldots,x_{n-1})$, and thus $fg \in x_n[I:(x_1,\ldots,x_{n-1})]+I$.
\end{proof}
 
\begin{remark}
The above proposition is motivated by the examples in \cite{D}. It can be used to easily provide examples of non-Golod ideals. For example, let $I=(x^2,y^2,z^2,t^2)(x,y,z,t)\subseteq Q=k[x,y,z,t]$. Then $xy\in I:(x,y)$ and $zt\in I:(z,t)$ but $xyzt\notin I$. Thus $I$ is not Golod. 
\end{remark}

It is well-known that, for homogeneous ideals inside polynomial rings in three variables, being Golod is equivalent to requiring that the product on the Koszul homology is trivial; for instance, see \cite[Theorem 6.3]{Ka}. In the same article, it is shown that this is not the case more generally, even for monomial ideals. In order to prove the converse of Proposition \ref{nec} for monomial ideals in $k[x,y,z]$, we show that the Koszul homology modules admit ``monomial bases". This is what we shall focus on for the rest of this section.  

\begin{definition} Let $Q=k[x_1,\ldots,x_n]$, and $I \subseteq \m^2$ be a monomial ideal. Let $R=Q/I$. We say that $H_p(K^R)$ admits a monomial basis if it has a $k$-basis consisting of classes of cycles of the form $\overline{u} e_{x_{i_1}\ldots x_{i_p}}$, where $u \in Q$ is a monomial and $\overline{u}$ denotes its image inside $R$.
\end{definition}

Clearly, if $\overline{u} e_{x_{i_1}\ldots x_{i_p}}$ is part of a monomial basis of $H_p(K^R)$, then $u \in I:(x_{i_1},\ldots,x_{i_p})$.

Observe that, if the ideal $I$ is homogeneous, then we can talk about homogeneous elements in $K^R$: if $r \in R$ is homogeneous of degree $d$, then $\deg(r e_{x_{i_1}\ldots e_{x_{i_p}}}) = d+p$. In this case, the differential preserves degrees. Even more specifically, if $I$ is monomial, then each $K_p^R$ is a $\ZZ^n$-graded $R$-module. If $r = x_1^{a_1}\cdots x_n^{a_n}$, then $r e_{x_{i_1}\ldots x_{i_p}}$ has degree $(a_1,\ldots,a_n) + \epsilon_{i_1} + \cdots + \epsilon_{i_p}$, where $\epsilon_j$ is the vector in $\ZZ^n$ which has $1$ in position $j$ and $0$ elsewhere. For example, $x^2y^3 e_{yz} \in K_2^{k[x,y,z]}$ has degree $(2,4,1)$. In this case, the differential $\partial$ on $K^R$ preserves multidegrees.

The following is well-known. Nonetheless, we provide a short proof for completeness.

\begin{proposition} \label{prop_grading}
Let $Q=k[x_1,\ldots,x_n]$ and $I \subseteq \m^2$ be a monomial ideal. Let $R=Q/I$. The modules $H_\bullet(K^R)$ admit a $\ZZ^n$-graded $k$-basis.
\end{proposition}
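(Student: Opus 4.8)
The plan is to exploit the $\ZZ^n$-grading on the Koszul complex $K^R$ directly. Since $I$ is a monomial ideal, each $K_p^R$ decomposes as a direct sum of its multigraded pieces $(K_p^R)_\alpha$ over $\alpha \in \ZZ^n$, and the differential $\partial$ preserves multidegree; hence the subcomplex in each fixed multidegree $\alpha$ is a complex of finite-dimensional $k$-vector spaces, and $H_\bullet(K^R) = \bigoplus_\alpha H_\bullet\big((K^R)_\alpha\big)$. The first step I would carry out is to make this decomposition explicit: observe that $(K_p^R)_\alpha$ is spanned over $k$ by those basis elements $\overline{u}\,e_{x_{i_1}\ldots x_{i_p}}$ for which $u$ is the (unique, up to the monomial ideal relation) monomial with $\deg(u e_{x_{i_1}\ldots x_{i_p}}) = \alpha$, i.e. $u = x^{\alpha - \epsilon_{i_1} - \cdots - \epsilon_{i_p}}$ whenever this exponent vector has nonnegative entries and $u \notin I$, and $(K_p^R)_\alpha = 0$ otherwise. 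In particular each multigraded strand $(K^R)_\alpha$ is a finite complex of $k$-vector spaces each of which comes with a distinguished monomial basis.

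Next I would simply observe that for a complex of finite-dimensional vector spaces over $k$, every homology module is a finite-dimensional $k$-vector space, and thus $H_p\big((K^R)_\alpha\big)$ is spanned by classes of cycles that are $k$-linear combinations of the distinguished monomial basis elements of $(K_p^R)_\alpha$ — but since all of those basis elements have the same multidegree $\alpha$, any such cycle is, up to a scalar normalization, a cycle supported on a subset of the monomial basis. The point is that we do not need to produce a basis consisting of \emph{single} monomial cycles; the statement of Proposition \ref{prop_grading} only asks for a $\ZZ^n$-graded $k$-basis of $H_\bullet(K^R)$, i.e. a basis each of whose elements is homogeneous for the $\ZZ^n$-grading. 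Choosing, for each $\alpha$, any $k$-basis of the finite-dimensional space $H_\bullet\big((K^R)_\alpha\big)$ and taking the union over all $\alpha$ yields such a basis, because $H_\bullet(K^R) = \bigoplus_\alpha H_\bullet\big((K^R)_\alpha\big)$ as graded vector spaces. This is the whole proof.

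The only point requiring a word of care — and what I would flag as the ``main obstacle,'' though it is really just a bookkeeping remark — is the distinction between a $\ZZ^n$-graded basis (elements homogeneous for the grading) and an honest monomial basis in the sense of the preceding definition (each basis element being the class of a \emph{single} term $\overline{u}\,e_{x_{i_1}\ldots x_{i_p}}$). Proposition \ref{prop_grading} asserts only the former, which is automatic from multigraded-ness plus finite dimensionality; the stronger monomial-basis property for $n=3$ is exactly what the rest of the section is devoted to establishing and should not be conflated with this lemma. So in writing the proof I would be explicit that ``$\ZZ^n$-graded $k$-basis'' means a basis of homogeneous elements, verify that the multigraded decomposition of $K^R$ passes to homology by the standard fact that homology commutes with direct sums, and conclude by selecting bases strand by strand. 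No nontrivial computation is involved.
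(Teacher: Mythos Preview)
Your argument is correct: once one observes that $K^R$ is a complex of $\ZZ^n$-graded $k$-vector spaces with multidegree-preserving differential (which the paper records just before the proposition), the homology inherits the grading strand by strand, and picking a basis in each strand gives the result. You are also right to flag the distinction between a $\ZZ^n$-graded basis and a genuine monomial basis; the proposition claims only the former.

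The paper takes a less direct route: it invokes a $\ZZ^n$-graded free resolution $P_\bullet$ of $R$ (e.g.\ the Taylor resolution), uses the double complex $P_\bullet \otimes K^Q$ to obtain a $\ZZ^n$-graded isomorphism $H_p(K^R)\cong \Tor{p}{Q}{R}{k}$, and then transports a graded basis of $\Tor{}{}{}{}$ back to Koszul cycles. Your approach is more elementary and perfectly sufficient for the proposition as stated. What the paper's detour buys is not this proposition itself but the machinery for the sequel: the explicit lifting of basis elements of $\Tor{}{}{}{}$ through the double complex is precisely the technique used in Lemma~\ref{monomial_extremal_cycles} and Proposition~\ref{n=3_monomial_cycles} to produce honest \emph{monomial} bases of $H_1$, $H_2$, and $H_3$. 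So the paper is really setting up that lifting procedure here, whereas your proof is the clean minimal argument for the statement in isolation.
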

\begin{proof} Since $I$ is a monomial ideal, $R$ admits a graded free resolution with $\ZZ^n$-graded shifts (for example, the Taylor resolution). There is a $\ZZ^n$-graded isomorphism
\[
H_p(K^R) \cong \Tor{p}{Q}{Q/I}{k}
\]
that comes from tracing Koszul cycles along the double complex $P_\bullet \otimes K^Q$, where $P_\bullet \to R$ is a $\ZZ^n$-graded free resolution of $R$, and $K^Q$ can be viewed as a $\ZZ^n$-graded minimal free resolution of $Q/\m \cong k$. Since $Q/I$ has $\ZZ^n$-graded shifts, we see that $\Tor{p}{Q}{Q/I}{k}$ has a $\ZZ^n$-graded $k$-basis. Via this isomorphism, such a basis maps to a set of graded Koszul cycles in $K_p^R$, which forms a $k$-basis in homology.
\end{proof}

We observe that if $\sum_{\{i_1,\ldots,i_p\} \subseteq [n]} r_{i_1\ldots i_p} e_{x_{i_1}\ldots x_{i_p}} \in K_p^R$ is $\ZZ^n$-graded, then  each $r_{i_1\ldots i_p}$ must necessarily be a monomial. Furthermore, we must have $r_{i_1\ldots i_p} x_{i_1} \cdots x_{i_p} = r_{i_1'\ldots i_p'} x_{i_1'}\ldots x_{i_p'}$ for all $\{i_1,\ldots , i_p\},\{i_1',\ldots,i_p'\}$ for which $r_{i_1\ldots i_p} \ne 0$ and $r_{i_1'\ldots i_p'} \ne 0$. For example, $xe_{yz} + ye_{xz} \in K_2^{k[x,y,z]}$ is $\ZZ^3$-graded, of degree $(1,1,1)$.

\begin{lemma} \label{monomial_extremal_cycles}
Let $Q=k[x_1,\ldots,x_n]$, and $I \subseteq \m^2$ be a non-zero monomial ideal. Let $R=Q/I$. There exists a $k$-basis of $H_1(K^R)$  consisting of elements of the form $\{u e_{x_i}\}$, where $u \in I:x_i$ is a monomial. Moreover, if $\depth(R)=0$, there exists a $k$-basis of $H_n(K^R)$  consisting of elements of the form $\{\overline{u} e_{x_1\ldots x_n}\}$, where $u \in I:\m$ is a monomial in $Q$ and $\overline{u}$ denotes its residue class in $R$.
\end{lemma}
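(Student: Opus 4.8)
The plan is to identify $H_1(K^R)$ and $H_n(K^R)$ with concrete (sub)quotients of $R$ on which the monomial structure is transparent, and then transport an evident monomial basis back into the Koszul complex.

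\emph{Degree one.} There is a standard isomorphism $H_1(K^R)\xrightarrow{\sim} I/\m I$, namely the connecting map of the long exact sequence obtained by applying $K^Q\otimes_Q-$ to $0\to I\to Q\to R\to 0$; concretely it sends the class of a cycle $\sum_i\overline{r_i}e_{x_i}$ (so $\sum_i r_ix_i\in I$ for lifts $r_i\in Q$) to $\sum_i r_ix_i+\m I$, and it is an isomorphism because $H_1(K^Q)=0$ and $H_0(K^Q)\to H_0(K^R)$ is an isomorphism. Since $I$ is monomial, the residues of its minimal monomial generators $m_1,\dots,m_r$ form a $k$-basis of $I/\m I$. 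For each $\ell$, pick a variable $x_{i_\ell}$ dividing $m_\ell$ and set $u_\ell=m_\ell/x_{i_\ell}$; minimality of $m_\ell$ forces $u_\ell\notin I$, so $u_\ell$ is a nonzero monomial of $R$ with $u_\ell\in I:x_{i_\ell}$, and $\partial(u_\ell e_{x_{i_\ell}})=x_{i_\ell}u_\ell=m_\ell=0$ in $R$, so $u_\ell e_{x_{i_\ell}}$ is a cycle mapping to $m_\ell+\m I$. As the isomorphism carries $\{[u_\ell e_{x_{i_\ell}}]\}_{\ell=1}^r$ onto a basis of $I/\m I$, this set is a $k$-basis of $H_1(K^R)$. (One can instead bypass the connecting map by arguing in the single multidegree $\deg m_\ell$: there $(K_0^R)$ vanishes, $H_1(K^R)$ is one-dimensional, and the boundaries are spanned by the differences $x^{\deg m_\ell-\epsilon_j}e_{x_j}-x^{\deg m_\ell-\epsilon_k}e_{x_k}$, so $u_\ell e_{x_{i_\ell}}$ cannot be a boundary.)

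\emph{Degree $n$.} Because $K^R$ vanishes in homological degrees above $n$, $H_n(K^R)=\ker(\partial_n)$. For $\overline r\in R$ one has $\partial_n(\overline r\,e_{x_1\cdots x_n})=\sum_{j=1}^n(-1)^{j-1}\overline rx_j\,e_{x_1\cdots\widehat{x_j}\cdots x_n}$, which is zero exactly when $\overline rx_j=0$ in $R$ for all $j$, i.e. when $\overline r\in 0:_R\m$. Hence $\overline r\,e_{x_1\cdots x_n}\mapsto\overline r$ is an isomorphism $H_n(K^R)\xrightarrow{\sim}(I:\m)/I=\operatorname{soc}(R)$, a finite-dimensional $k$-vector space (it is $\Tor{n}{Q}{R}{k}$) which is nonzero precisely when $\depth R=0$. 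Now $I:\m=\bigcap_j(I:x_j)$ is a monomial ideal containing $I$, so $(I:\m)/I$ has a $k$-basis given by the residues of the finitely many monomials lying in $I:\m$ but not in $I$; pulling this basis back through the isomorphism yields the desired monomial basis $\{\overline u\,e_{x_1\cdots x_n}\}$ of $H_n(K^R)$, with each $u$ a monomial in $I:\m$.

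The supporting facts — that $\partial_1(u_\ell e_{x_{i_\ell}})=m_\ell$, that $J/I$ has a monomial $k$-basis whenever $I\subseteq J$ are monomial ideals, and that colons and intersections of monomial ideals are monomial — are immediate. The only step needing genuine care is pinning down the explicit degree-one isomorphism: one checks it is independent of the lifts $r_i$ (the ambiguity lies in $I\m=\m I$) and kills boundaries (using $\partial(e_{x_jx_k})=x_je_{x_k}-x_ke_{x_j}$, the relevant double sum telescopes to $0$). I do not anticipate a real obstacle here; this explicit identification is the crux of the argument.
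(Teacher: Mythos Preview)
Your proof is correct and, at bottom, the same argument as the paper's: both identify $H_1(K^R)\cong I/\m I\cong\Tor{1}{Q}{R}{k}$ and pull back the minimal monomial generators $m_\ell=x_{i_\ell}u_\ell$ to the cycles $u_\ell e_{x_{i_\ell}}$, and both treat $H_n$ by the obvious identification with the socle $(I:\m)/I$. The only difference is packaging: you use the connecting map of the long exact sequence from $0\to I\to Q\to R\to 0$, whereas the paper spells out the same isomorphism by chasing the double complex $P_\bullet\otimes K^Q$. The paper even remarks that the direct argument for $H_1$ is easy, but opts for the double-complex lifting because that technique is reused verbatim to handle $H_2(K^R)$ in the next proposition; your approach is slightly slicker here but would need to be supplemented if you go on to $H_2$.
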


\begin{proof}
It is clear that a $k$-basis of $H_n(K^R)$ can be chosen to be of such form. In fact, an element of $K_n^R$ is of the form $\overline{f} e_{x_1\ldots x_n}$, where $f \in I:\m$. Since $I$ is monomial, we can choose $f$ to be a monomial.

It is also fairly easy to prove the claim for $H_1(K^R)$ directly. However, we explain the process via lifting Koszul cycles, as this technique will be used later. Let $(P_\bullet, \delta)$ be a graded free resolution of $R$ as a $Q$-module. As noted in Proposition \ref{prop_grading}, we have a graded isomorphism between $\Tor{1}{Q}{Q/I}k$ and $H_1(K^R)$. Let $I = (m_1,\ldots,m_t)$ be a minimal monomial generating set for $I$, and say that $m_j=x_1^{a_{1j}}\cdots x_n^{a_{nj}} =: \underline{x}^{\underline{a}_j}$. Then $\Tor{1}{Q}{Q/I}k \cong \oplus_j k(-\underline{a}_j)$. The way the isomorphism goes is as follows: 
\[
\xymatrix{
& \oplus_j Q(-\underline{a}_j) \otimes Q \ar[d]^-{\delta_1\otimes 1}\ar[r]^-{1 \otimes \partial_0} & \oplus_j Q(-\underline{a}_j) \otimes k\cong \Tor{1}{Q}{Q/I}k\\
Q \otimes K_1^Q \ar[r]^-{1 \otimes \partial_1} \ar[d]^-{\delta_0 \otimes 1} & Q \otimes Q\\
R \otimes K_1^Q \ar@{-->}[d] \\
H_1(K^R)
}
\]
More explicitly, if we take a $\ZZ^n$-graded basis element of $k(-\underline{a}_j) \subseteq \Tor{1}{Q}{Q/I}k$ and lift it to a basis element of $Q(-\underline{a}_j) \otimes Q$, then this will map down to $m_j \otimes 1\in Q \otimes Q$ under $\delta_1 \otimes 1$. If $x_j$ is any variable that divides $m_j$, say $m_j = x_jm_j'$, then $m_j \otimes 1= (1\otimes \partial)(m_j' \otimes e_{x_j})$, where $m_j' \otimes e_{x_j} \in Q\otimes K_1^Q $. Applying $\delta_0 \otimes 1$ we get the element $\overline{m_j'}e_{x_j} \in K_1^R \otimes R \otimes Q$ which is a Koszul cycle. The process ends by considering its residue class in $H_1(K^R)$, which is a $k$-basis element of the desired form. Namely, a $k$-basis element of the form $\overline{m_j'} e_{x_j}$ where $m_j' \in I:x_j$ is a monomial.
\end{proof}
\begin{proposition} \label{n=3_monomial_cycles}
Let $I \subseteq Q=k[x,y,z]$ be a non-zero monomial ideal, and let $R=Q/I$. Then, for all $1 \leq p \leq 3-\depth(R)$, the module $H_p(K^R)$ admits a $k$-basis of the form $\{\overline{u} e_{x_{i_1 \ldots x_{i_p}}}\}$, where $u \in I:(x_{i_1},\ldots,x_{i_p})$ is a monomial, and $\overline{u}$ denotes the residue class of $u$ in $R$.
\end{proposition}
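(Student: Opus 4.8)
The plan is to induct on the number of variables, or rather to exploit the fact that for $n=3$ the only cases not already handled by Lemma~\ref{monomial_extremal_cycles} are $p=1$ and $p=2$, and among those $p=1$ is done. So the real content is the case $p=2$, under the assumption $\depth(R)\le 1$. First I would invoke Proposition~\ref{prop_grading} to fix a $\ZZ^3$-graded $k$-basis of $H_2(K^R)$; by the observation following that proposition, each such basis cycle has the form $z = r_{xy}e_{xy}+r_{xz}e_{xz}+r_{yz}e_{yz}$ with each $r_{\bullet}$ either zero or a monomial, and all the products $r_{xy}\cdot xy$, $r_{xz}\cdot xz$, $r_{yz}\cdot yz$ (over the nonzero terms) equal to one common monomial $\underline{x}^{\underline{a}}$. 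The goal is to replace $z$ in homology by a single-term monomial cycle $\overline{u}e_{x_ix_j}$.

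The key step is a normalization argument on a single graded cycle. Suppose $z$ has at least two nonzero terms, say $r_{yz}\ne 0$ and $r_{xz}\ne 0$. Write the cycle condition $\partial z=0$ in each multidegree; because the differential is $\ZZ^3$-graded, $\partial z=0$ is a genuine syzygy among the monomials $x_{i_1}x_{i_2}\cdot r_{i_1i_2}$ in $R$, and I expect it to force a Koszul-type relation: e.g. if $r_{yz}=z^by^c\cdots$ (with appropriate exponents) then one shows $z-\partial(w\,e_{xyz})$ for a suitable monomial $w$ kills off one of the terms, at the cost of possibly introducing the third term with adjusted exponents. Iterating, since each step strictly decreases, say, the number of nonzero terms, or strictly decreases the exponent of some fixed variable, the process terminates with a cycle supported on a single $e_{x_ix_j}$ — unless it terminates at $0$, in which case $z$ was a boundary and didn't belong to a basis in the first place. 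Thus each graded basis cycle can be taken monomial, and because the cycles in distinct multidegrees are still linearly independent after this replacement (multidegrees are unchanged by the homologous modification), we obtain a monomial $k$-basis of $H_2(K^R)$. Finally one notes $u\in I:(x_i,x_j)$ automatically, as recorded right after the definition of monomial basis.

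The main obstacle I anticipate is the bookkeeping in the normalization step: showing that subtracting an appropriate boundary $\partial(w e_{xyz})$ genuinely simplifies a two- or three-term graded cycle, and that the induction measure (number of terms, or total degree in $z$, or the exponent of $z$) strictly decreases without ever cycling. This needs the explicit form of $\partial$ on $K_3^R$, namely $\partial(w e_{xyz}) = xw\,e_{yz} - yw\,e_{xz} + zw\,e_{xy}$, together with a case analysis on which variables divide the common multidegree monomial $\underline{x}^{\underline a}$. In particular, when $\depth(R)\le 1$ there is at least one variable, say $z$, that is a nonzerodivisor "enough" to make the relevant boundary nonzero; the hypothesis $p\le 3-\depth(R)$ is exactly what guarantees that the degree pattern of a graded cycle leaves room for such a $w$ to exist in $R$ (i.e.\ that $w\notin I$). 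I would organize the proof so that this reduction is stated as the crux lemma and the rest — assembling the basis, checking independence, checking the colon membership — is routine.
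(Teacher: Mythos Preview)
Your approach differs from the paper's. The paper does not normalize cycles inside $K^R$; instead it traces the isomorphism $\Tor 2Q{Q/I}k \cong H_2(K^R)$ explicitly through the double complex, using that in a minimal $\ZZ^3$-graded free resolution of $R$ every column of $\delta_2$ is a binomial relation $m_iu_i-m_ju_j=0$ between two distinct minimal monomial generators. Lifting such a column through the double complex produces a single-term cycle $\overline w\,e_{x_ix_j}$ in one step, with no iteration or case split. The only structural input is the (standard) fact that first syzygies among monomials are binomial.

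Your direct argument can be made to work, but the ``iteration with a decreasing measure'' is not correct as stated. At a fixed multidegree $(a,b,c)$ with $a,b,c\ge 1$ there is exactly one boundary $\partial(w\,e_{xyz})$, and it generically has three nonzero terms; subtracting a multiple of it from a two-term cycle will reintroduce the third term rather than strictly shrink the support, so no monotone measure drives an iteration. What actually succeeds is a short case analysis: writing a graded cycle as $\lambda_1 m_{xy}e_{xy}+\lambda_2 m_{xz}e_{xz}+\lambda_3 m_{yz}e_{yz}$, the cycle equations force, for each of the monomials $x^{a-1}y^bz^c$, $x^ay^{b-1}z^c$, $x^ay^bz^{c-1}$, either membership in $I$ or a linear constraint among the $\lambda_i$; running through the cases shows the class is always a $k$-combination of whichever $m_{ij}e_{x_ix_j}$ happen to be cycles (possibly after one boundary correction, which can kill two terms while introducing a third). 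Two further fixes: your independence claim only addresses distinct multidegrees, not several basis elements in the same multidegree, so argue instead that monomial cycles span each graded piece of $H_2(K^R)$ and then extract a basis; and the remark tying $\depth(R)\le 1$ to a variable that is ``nonzerodivisor enough'' or to $w\notin I$ is incorrect and unnecessary --- the depth hypothesis is only there to ensure $H_2(K^R)\ne 0$.
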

\begin{proof}
The statement for $H_1(K^R)$ and $H_3(K^R)$ has already been proved in Lemma \ref{monomial_extremal_cycles} (assuming that $\depth(R)=0$ for the latter to be non-zero). The argument for $H_2(K^R)$ exploits again the process of lifting Koszul cycles. Assume that $H_2(K^R) \ne 0$, that is, $\depth(R) \leq 1$. We consider a minimal $\ZZ^3$-graded free resolution of $R$ over $Q$:
\[
\xymatrix{
F_\bullet: \ldots \ar[r] & \oplus_\ell Q(-\underline{b}_\ell) \ar[r]^-{\delta_2} & \oplus_j Q(-\underline{a}_j) \ar[r]^-{\delta_1} & Q \ar[r]^-{\delta_0} & R \ar[r] & 0.
}
\]
After fixing bases, $\delta_1$ can be represented as the matrix $[m_1,\ldots,m_t]$, where $\{m_1,\ldots,m_t\}$ is a minimal monomial generating set of $I$. On the other hand, $\delta_2$ is represented by a matrix where every column has precisely two non-zero monomial entries. This is because every relation between distinct monomials $m_i$ and $m_j$ is of this form $m_iu_i - m_ju_j=0$ for some monomials $u_i,u_j \in \m$. We now describe the lifting process for $H_2(K^R)$. Consider the following part of double complex $F_\bullet \otimes K_\bullet^Q$:
\[
\xymatrix{
&& \oplus_\ell Q(-\underline{b}_\ell) \otimes Q \ar[d]^-{\delta_2\otimes 1}\ar[r]^-{1 \otimes \partial_0} & \oplus_\ell Q(-\underline{b}_\ell) \otimes k\cong \Tor 2R{Q/I}k \\
&\oplus_j Q(-\underline{a}_j) \otimes K_1^Q \ar[r]^-{1 \otimes \partial_1} \ar[d]^-{\delta_1 \otimes 1} & \oplus_j Q(-\underline{a}_j) \otimes Q \\
Q \otimes K_2^Q \ar[d]^-{\delta_0 \otimes 1} \ar[r]^-{1 \otimes \partial_2} & Q \otimes K_1^Q \\
R \otimes K_2^Q \ar@{-->}[d]  \\
H_2(K^R)
}
\]
If we lift a $\ZZ^3$-graded basis element of $k(-\underline{b}_\ell) \subseteq \Tor 2R{Q/I}k$ to $\oplus_\ell Q(-\underline{b}_\ell) \otimes Q$, this will map down via $\delta_2 \otimes 1$ to an element $(0,\ldots,u_i,\ldots,-u_j,\ldots,0) \in \oplus_j Q(-\underline{a}_j) \cong \bigoplus_j Q(-\underline{a}_j) \otimes Q$, corresponding to a binomial relation in the $\ell$-th column of the matrix representing $\delta_2$, as described above. Write $u_i=x_iv_i$ and $u_j=x_jv_j$, for some $i,j$, and monomials $v_i,v_j \in Q$. Observe that $i \ne j$, since otherwise the relation between $m_i$ and $m_j$ given by $(0,\ldots,u_i,\ldots,-u_j,\ldots,0)$ would not be minimal. We may assume that $i<j$. Using the above relations, we have that $(0,\ldots,u_i,\ldots,-u_j,\ldots,0) = (1 \otimes \partial_1)(0,\ldots, v_ie_{x_i},\ldots,-v_je_{x_j},\ldots,0)$. We now push this element down via $\delta_1 \otimes 1$, to get an element $\sigma = v_i m_i \otimes e_{x_i} - v_j m_j \otimes e_{x_j} \in  Q\otimes  K_1^Q$.  From $x_iv_im_i = x_jv_jm_j$, we deduce that $v_im_i = -x_jw$ for some monomial $w$. Consider the element $w \otimes e_{x_ix_j} \in Q \otimes K_2^Q$; we claim that $(1 \otimes \partial_2)(w \otimes e_{x_ix_j}) = \sigma$. By definition of the differential, we have $(1 \otimes \partial_2)(w \otimes e_{x_ix_j}) = wx_i e_{x_j} - wx_j e_{x_i}$. Since $-wx_jx_i = v_im_ix_i = v_jm_jx_j$, we deduce that $wx_i = -v_jm_j$. Putting these facts together, gives $(1 \otimes \partial_2)(w \otimes e_{x_ix_j}) = v_im_i \otimes e_{x_i} - v_jm_j \otimes e_{x_j} = \sigma$, as desired. As $(\delta_0 \otimes 1)(w \otimes e_{x_ix_j} = \overline{w} \otimes e_{x_ix_j}$ is a cycle in $R \otimes K_2^Q$, the process of lifting Koszul cycles now ends by considering the class of the element $\overline{w} e_{x_ix_j}$ inside $H_2(K^R)$. As observed above, inside $Q$ we have $wx_i = -v_jmj \in I$, and $wx_j = -v_im_i \in I$. Therefore $w \in I:(x_i,x_j)$, and the class of $\overline{w} e_{x_ix_j}$ inside $H_2(K^R)$ gives then a basis element of the desired form.
\end{proof}

\begin{proof}[Proof of Theorem \ref{mainT}]
The necessary part was Proposition \ref{nec}. But as Proposition \ref{n=3_monomial_cycles} shows that all Koszul homologies admit a $k$-basis of the form $\{u e_{x_{i_1 \ldots x_{i_p}}}\}$, where $u \in I:(x_{i_1},\ldots,x_{i_p})$ is a monomial and $p=1,2,3$, the stated conditions are also sufficient. 
\end{proof}

We observe that the condition that $I \subseteq \m^2$ in Theorem \ref{mainT} cannot be removed.\begin{example}
Consider the ideal $I=(x,y^2,yz,z^2)$ inside $Q = k[x,y,z]$. This ideal does not satisfy the second condition of Theorem \ref{mainT}, since $y \in [I:x] \cdot [I:y] = I:y$, but $y \notin z[I:(x,y)]+I = I$. However, the ring $Q/I \cong k[y,z]/(y,z)^2$ is Golod.
\end{example}

\begin{remark}
Let $I$ be a monomial ideal in $k[x_1,\dots, x_n]$. The condition ``strongly Golod" considered by Herzog and Huneke in \cite{HH} means, in this context, that $[I:(x_i)][I:(x_j)]\subseteq I$ for all $1\leq i,j\leq n$. This condition is clearly stronger than all the necessary colon conditions in \ref{nec}. This makes sense, since strongly Golod implies Golod. 
\end{remark}

We conclude this section observing that, in the case of monomial ideals in four or more variables, some Koszul homology modules may not always admit a ``monomial basis''.
\begin{example}
Consider the ideal $I=(xz,xw,yz,yw,x^2,y^2,z^2,w^2)$ in $Q=k[x,y,z,w]$, and let $R=Q/I$. It is easy to check that $\alpha = xe_{yzw}-ye_{xzw}$ is a cycle of $K^R_3$, whose class equals that of $we_{xyz}-ze_{xyw}$ in homology. Observe that $\alpha$ has multidegree $(1,1,1,1)$. Exploiting the multigrading, one can show that the class of $\alpha$ in homology cannot be expressed as a combination of elements coming from a monomial basis of $H_3(K^R)$.
\end{example}

\section{Products of monomial ideals in $k[x,y,z]$ are Golod}

In this section we prove Corollary \ref{product}. By Theorem \ref{mainT} and symmetry, it suffices to show the following lemmas. 

\begin{lemma}\label{jk1}
Let $J,K$ be proper monomial ideals in $Q=k[x,y,z]$ and $I=JK$. Then $$[I:x][I:(y,z)]\subseteq I.$$
\end{lemma}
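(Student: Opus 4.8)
My plan is to reduce everything to an explicit monomial computation, exploiting the fact that all the ideals involved are monomial, so the colon ideals $[I:x]$, $[I:(y,z)]$ have monomial generators that I can describe in terms of the generators of $J$ and $K$. First I would write $I = JK$ and observe that $[I:x] \cdot [I:(y,z)] \subseteq I$ need only be checked on pairs of monomial generators: if $f$ is a monomial generator of $[I:x]$ and $g$ a monomial generator of $[I:(y,z)]$, it suffices to show $fg \in I$. Since $xf \in I = JK$, I can write $xf = ab$ with $a$ a monomial in $J$ (one of a chosen generating set, or a multiple thereof) and $b$ a monomial in $K$; similarly $yg = cd$ and $zg = c'd'$ with $c,c' \in J$, $d,d' \in K$ monomials. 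The target membership $fg \in JK$ will follow once I locate a factorization of $fg$ into a $J$-part times a $K$-part.

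The key structural point I would isolate is a dichotomy based on the exponent of $x$ in the relevant monomials. Write $x$-degrees: since $xf \in JK$ and we want to understand $f$ itself, either $f$ is already in $JK$ (in which case $fg \in JK$ trivially, as $JK$ is an ideal), or the variable $x$ is genuinely needed, which forces a factor of $xf$ to have positive $x$-degree. Meanwhile, because $yg, zg \in JK$ and $g$ is a single monomial, the monomial $g$ must be "almost" in $JK$ in a way controlled by $y$ and $z$. The plan is: in the case $f \notin JK$, extract from $xf \in JK$ a factorization in which the $x$ is consumed, say $xf = a b$ with $x \mid a$, $a \in J$ (resp. $x \mid b$, $b\in K$); then $a' := a/x$ or $b' := b/x$ gives $f = a' b$ (resp. $f = a b'$). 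Then $fg = a'(bg)$, and I need $bg \in K$; this should follow from $yg \in JK \subseteq K$ together with the fact that only $y$ or $z$ can be the "missing" variable in $g$, and $b$ — coming from the $K$-factor of $xf$ — carries no obstruction. The symmetric subcase ($x$ consumed on the $K$-side) is handled the same way with the roles of $J$ and $K$ swapped. Throughout I would use that $J$ and $K$ are \emph{proper}, so their generators lie in $\m$, which is what makes the degree bookkeeping work.

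The main obstacle, I expect, is the case analysis on which variable is "responsible" for $f$ not lying in $I$ versus which variables are responsible for $g$: a priori $xf\in JK$ could split with the $x$ on either side, and $g$ could fail to be in $JK$ because of $y$, because of $z$, or because of both, and one must check that in every combination the factorization of $fg$ can be assembled. The cleanest way to manage this is probably to argue purely at the level of exponent vectors in $\ZZ^3_{\geq 0}$: fix the minimal generators of $J$ and $K$, note that $m \in JK$ iff $m$ is divisible by $pq$ for some generator $p$ of $J$ and $q$ of $K$, and then the statement becomes a finite combinatorial claim about when divisibility of $xf$ and of $yg, zg$ forces divisibility of $fg$. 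I would also keep in mind the next lemma (the one establishing condition (2) of Theorem~\ref{mainT} for products) since the bookkeeping there is similar, and set up notation here that will be reusable.
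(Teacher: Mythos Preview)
Your sketch contains a genuine gap. In the case $f \notin JK$ you write $xf = ab$ with $a \in J$, $b \in K$, assume $x \mid a$, set $a' = a/x$, and then say ``$fg = a'(bg)$, and I need $bg \in K$.'' But $bg \in K$ is automatic (since $b \in K$) and buys you nothing: to conclude $fg = a'(bg) \in JK$ you would need $a' \in J$, and that is false in general. For a concrete instance, take $J=(x^2)$, $K=(x)$, $f = x^2$; then $xf = x^3 = a b$ with $a = x^2 \in J$, $b = x \in K$, and $a' = x \notin J$. Rewriting as $fg = (a'g)\,b$ does not help either, since when $a'$ is a pure power of $x$ the information $yg, zg \in JK \subseteq J$ gives no control over $a'g$.

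The dichotomy that actually works is not ``$f \in JK$ or not'' but rather ``$f \in (y,z)$ or $f$ is a pure power of $x$.'' If $f \in (y,z)$, then $fg \in (y,z)g \subseteq I$ immediately, since $g \in I:(y,z)$. If $f = x^a$, then $x^{a+1} \in JK$ forces $x^{a+1} = x^i \cdot x^j$ with $x^i \in J$, $x^j \in K$; properness gives $i,j \ge 1$, hence $i,j \le a$ and $f = x^a \in J \cap K$. On the other side, $g \in JK:y = (J:y)K + J(K:y) \subseteq K + J$, and since $g$ is a monomial it lies in $J$ or in $K$; either way $fg \in JK$. Your exponent-vector bookkeeping would eventually rediscover this, but the factorization $f = a'b$ you set up points away from it. The single missing idea is to split on whether $f$ is divisible by $y$ or $z$, not on whether $f$ already lies in $I$.
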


\begin{proof}
Let $f \in I:x$ and $g\in I:(y,z)$ be monomials. If $f \in (y,z)$ then $fg\in I$, so we assume $f=x^a$ for some $a\geq 0$. It follows that $x^{a+1}\in JK$, and as $J,K$ are proper we must have $f\in J \cap K$. As $J,K$ are monomial ideals, we have $g\in JK:y \subseteq (J:y)K+(K:y)J \subseteq J+K$, and we are done. 

\end{proof}

\begin{lemma}\label{jk2}
Let $J,K$ be proper monomial ideals in $Q=k[x,y,z]$ and $I=JK$. Then $$(I:x)(I:y)\subseteq z[I:(x,y)]+I.$$
\end{lemma}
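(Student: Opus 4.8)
The plan is to argue entirely with monomials, using the reduction trick from the proof of Lemma~\ref{jk1} to strip off the ``easy'' variables and then splitting on divisibility by $z$. Let $f \in I:x$ and $g \in I:y$ be monomials; the goal is $fg \in z[I:(x,y)]+I$. First, if $y \mid f$ then $fg = (f/y)(yg) \in I$ since $yg \in I$, and symmetrically if $x \mid g$ then $fg = (g/x)(xf) \in I$. So one may assume $y \nmid f$ and $x \nmid g$, i.e.\ $f = x^a z^c$ and $g = y^b z^d$ for some $a,b,c,d \geq 0$.

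Next, split into three cases according to how $z$ divides $f$ and $g$. If $c \geq 1$ and $d \geq 1$, put $h := fg/z = x^a y^b z^{c+d-1}$; then $xh = (xf)(g/z) \in I$ because $xf \in I$, and $yh = (yg)(f/z) \in I$ because $yg \in I$, so $h \in I:(x,y)$ and $fg = zh \in z[I:(x,y)]$. In the two remaining cases I will prove the sharper statement $fg \in I$, and since the containment to be proved is invariant under the automorphism of $Q$ interchanging $x$ and $y$ (which merely replaces $J,K$ by another pair of proper monomial ideals), it is enough to handle $z \nmid g$, i.e.\ $g = y^b$ with $b \geq 1$ (note $g \neq 1$ because $I \subseteq \m^2$).

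So suppose $g = y^b$. Since $xf = x^{a+1}z^c \in JK$, there are minimal generators $p$ of $J$ and $q$ of $K$ with $pq \mid x^{a+1}z^c$; being divisors of $x^{a+1}z^c$ they involve only $x$ and $z$, say $p = x^{\alpha_1}z^{\gamma_1}$, $q = x^{\alpha_2}z^{\gamma_2}$ with $\alpha_1+\alpha_2 \leq a+1$ and $\gamma_1+\gamma_2 \leq c$. Likewise $yg = y^{b+1} \in JK$ gives minimal generators $y^{\beta_1}$ of $J$ and $y^{\beta_2}$ of $K$ with $\beta_1+\beta_2 \leq b+1$; by properness of $J$ and $K$ we have $\beta_1,\beta_2 \geq 1$, hence $\beta_1,\beta_2 \leq b$. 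Now $p\,y^{\beta_2} = x^{\alpha_1}y^{\beta_2}z^{\gamma_1}$ and $y^{\beta_1}q = x^{\alpha_2}y^{\beta_1}z^{\gamma_2}$ both lie in $JK$, and since $\alpha_1+\alpha_2 \leq a+1$ at least one of $\alpha_1,\alpha_2$ is $\leq a$; the corresponding one of these two monomials divides $fg = x^ay^bz^c$ — its $x$-exponent is $\leq a$, its $y$-exponent is $\leq b$, and its $z$-exponent is $\leq \gamma_1+\gamma_2 \leq c$ — so $fg \in JK = I$.

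The main obstacle is this last step, and it is exactly where the product structure of $I$ and the presence of only three variables are used: after the reductions $f$ lives in $k[x,z]$ and $g$ in $k[y]$, so any factorization of $xf$ through $J$ and $K$ spends total $z$-degree $\leq c$ while a factorization of $yg$ is forced by properness to spend $y$-degree $\leq b$ on each side, and the cross-products then automatically divide $fg$; a fourth variable would let the two factorizations interfere. It is also worth noting that the naive containment $(I:x)(I:y)\subseteq I$ fails in general — already $z^2 \in (I:x)(I:y)\setminus I$ for $I=(x,y)(x,z)$ — so the term $z[I:(x,y)]$, which absorbs precisely the first case ($z\mid f$ and $z\mid g$), cannot be removed.
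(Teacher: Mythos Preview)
Your proof is correct, and it takes a genuinely different route from the paper's.

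The paper argues ideal-theoretically: it uses the identity $JK:x=(J:x)K+(K:x)J$ (valid for monomial ideals) to expand $(I:x)(I:y)$ into four summands, reduces by symmetry to showing $J^2(K:x)(K:y)\subseteq z[JK:(x,y)]+JK$, and then handles this via the decomposition $J=J_1+zJ'$ with $J_1$ generated by $J\cap k[x,y]$, together with the inclusion $(K:x)(K:y)\subseteq K:(x,y)$. Your argument, by contrast, is entirely element-wise: after stripping off the easy cases $y\mid f$ or $x\mid g$, you split on whether $z$ divides both $f$ and $g$, and in the residual case you extract explicit monomial generators of $J$ and $K$ from the memberships $xf\in JK$ and $yg\in JK$, then recombine them crosswise to produce a divisor of $fg$ lying in $JK$.

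Each approach has its merits. The paper's argument is short and structural; once one knows the colon-of-product identity and the $J_1+zJ'$ trick, everything follows in a few lines. Your argument is more hands-on but also more transparent about \emph{where} the three-variable hypothesis and the product structure enter, and it yields the sharper conclusion $fg\in I$ (not merely $fg\in z[I:(x,y)]+I$) whenever $z\nmid f$ or $z\nmid g$. Your closing example $I=(x,y)(x,z)$ showing that the $z[I:(x,y)]$ term is genuinely needed is a nice touch not present in the paper.
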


\begin{proof}
As $J$ and $K$ are monomial ideals, we have $JK:x= (J:x)K+(K:x)J$. We then have
\[
(JK:x)(JK:y) = J^2(K:x)(K:y) + K^2(J:x)(J:y) +JK(J:x)(K:y) +JK(K:x)(J:y).
\]
By symmetry, it is enough to show that  $J^2(K:x)(K:y) \subseteq z[JK:(x,y)] +JK$.

Let $J=J_1+zJ'$ with $J_1$ generated by $J\cap k[x,y]$. Then 
\[
J^2(K:x)(K:y) \subseteq J^2[K:(x,y)]= [J_1 +zJ']J[K:(x,y)].
\]
But 
\[
J_1J[K:(x,y)]\subseteq (x,y)J[K:(x,y)] \subseteq JK
\]
and   
\[
zJ'J[K:(x,y)] \subseteq zJ[K:(x,y)]\subseteq z[JK:(x,y)].
\]
\end{proof}

\section{Integrally closed ideals and some questions}

The colon conditions considered in this paper seem related to the property of ``being integrally closed" (see also the $\m$-full and basically full conditions \cite{Wa, HRR}). Here we give some positive and negative results in this direction. 

\begin{lemma}\label{inte}
Let $J,K$ be homogenous ideals in $Q=k[x,y,z]$ ($J$ may not be proper) and $\m=(x,y,z)$. Assume that $K\subseteq \m^2$ and $K$ is integrally closed. Then if $I=JK$, we have $[I:x][I:(y,z)]\subseteq I$.
\end{lemma}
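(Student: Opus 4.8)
The plan is to mimic the structure of the proof of Lemma \ref{jk2}, reducing to a statement about a single pair $(J,K)$ where $K$ is the integrally closed factor, and then to exploit the Brian\c{c}on--Skoda type behavior of integral closure in a polynomial ring in three variables. First I would write $JK:x = (J:x)K + (K:x)J$ (valid since both are monomial --- or here, we should be careful: if $K$ is merely homogeneous and integrally closed but not monomial, this colon formula need not hold, so I would instead argue directly). Let $f \in I:x$ and $g \in I:(y,z)$; I want $fg \in I = JK$. The key dichotomy, exactly as in Lemma \ref{jk1}, is whether $f \in (y,z)$ or not: if $f \in (y,z)$ then $fg \in (y,z)(I:(y,z)) \subseteq I$ and we are done, so the interesting case is $f = x^a \cdot (\text{unit})$, i.e.\ $f$ is (up to scalar) a power of $x$, forcing $x^{a+1} \in JK$.

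The heart of the argument is then: from $x^{a+1} \in JK$ and $g(y,z) \subseteq JK$, deduce $x^a g \in JK$. I would try to show $g \in \overline{(x)K}$ or more precisely relate $x^a g$ to the integral closure of a suitable product ideal. The natural tool is that in $Q = k[x,y,z]$ (dimension $3$), for any ideal $L$ one has $\overline{L^n} \subseteq L$ when... — more usefully, the relevant fact is likely: if $K$ is integrally closed, then $K : \m$ behaves well, or that $x \cdot \overline{K:x} \subseteq K$ type statements. Concretely, $g(y,z) \subseteq JK$ together with $x^{a+1} \in JK$ suggests looking at the ideal $JK : x^a$ and showing $g$ lies in it. I expect the cleanest route is: reduce to $J$ principal (or a single generator, by the monomial generating-set decomposition as in Lemma \ref{jk2}), say $J = (m)$, so we need $x^a g \in (m)K$, i.e.\ $g \in (m)K : x^a$; knowing $x^{a+1} \in (m)K$ means $m \mid x^{a+1}$ up to elements of $K$, and then integral closedness of $K$ (via $K \subseteq \m^2$, so $m$ and the $x$-power interact through at least two variables) forces the containment.

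The main obstacle I anticipate is precisely handling the integral closure: translating ``$K$ integrally closed'' into a usable colon/product identity. The relevant classical input is that for an $\m$-primary (or just homogeneous) integrally closed ideal $K$ in a two- or three-dimensional regular ring, $\overline{(x)K} = (x)\overline{K} \cap \m\overline{K}$-type formulas (product of integrally closed is integrally closed in dimension $2$, due to Zariski, but fails in dimension $3$ --- this is the subtlety, and it is likely why $K \subseteq \m^2$ is hypothesized). I would look for the statement that $(x)K : x \subseteq \overline{K} = K$ combined with a degree/support argument to upgrade $g(y,z) \subseteq JK$, $x^{a+1}\in JK$ to $x^a g \in JK$. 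If a clean ideal identity is not available, the fallback is the valuative criterion for integral closure applied to the monomial orderings/valuations $v_x, v_y, v_z$ and their combinations, checking that $x^a g$ satisfies all the inequalities defining membership in the integrally closed ideal $JK$; since everything can be taken monomial after the standard reductions, this becomes a finite combinatorial check on exponent vectors, and the condition $K \subseteq \m^2$ should be exactly what makes the relevant inequality go through.
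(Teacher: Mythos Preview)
Your proposal has a genuine gap at the very first reduction. The dichotomy ``either $f\in(y,z)$ or $f$ is, up to scalar, a power of $x$'' is valid only when $f$ is a monomial. Here $J$ and $K$ are arbitrary homogeneous ideals, so a general $f\in I:x$ can perfectly well look like $x^2+yz$: not in $(y,z)$, yet not a power of $x$. Once this dichotomy fails, the rest of the sketch (reducing to $x^{a+1}\in JK$, then invoking Brian\c{c}on--Skoda or a valuative/exponent-vector check) has no foundation. The fallback to ``standard monomial reductions'' at the end is likewise unavailable, since neither factor is assumed monomial.

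The paper's argument fixes exactly this by working modulo $(y,z)$ rather than trying to split $f$ into two cases. One writes $(J,y,z)=(x^a,y,z)$ and $(K,y,z)=(x^b,y,z)$ with $b\ge 2$ (this is where $K\subseteq\m^2$ is used), so any $f\in JK:x$ decomposes as $f=ux^{a+b-1}+v$ with $v\in(y,z)$. The crucial ingredient you are missing is then the determinantal (Cayley--Hamilton) characterization of integral dependence: for the element $h=ux^{b-1}g$ one shows $h\cdot\m J\subseteq \m J\,K$, which forces $h\in\overline{K}=K$. This single step replaces all of the Brian\c{c}on--Skoda and valuative machinery you were reaching for. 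From $h\in K$ one finishes by noting $fg\equiv x^a h\pmod{JK}$ and $x^a\in(J,y,z)$, so $x^a h\in (J,y,z)\bigl(K\cap(JK:(y,z))\bigr)\subseteq JK$. Your intuition that ``$(x)K:x\subseteq\overline{K}=K$'' is the relevant fact was in the right neighborhood, but the precise form needed is $\m JK:\m J\subseteq\overline{K}$, and getting $h$ into that colon is where $b\ge2$ (hence $K\subseteq\m^2$) is actually consumed.
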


\begin{proof}
Write $(J,y,z) = (x^a,y,z)$ and $(K,y,z)=(x^b,y,z)$ for some $a,b$, with $b \geq 2$ by assumption. Let $f \in JK:x$, so that $fx \in JK \subseteq (J,y,z)(K,y,z) \subseteq (x^{a+b},y,z)$. It follows that $f \in (x^{a+b-1},y,z)$, write $f=ux^{a+b-1}+v$ for some $v \in (y,z)$. Now let $g \in JK:(y,z)$, and consider the element $h=ux^{b-1}g$. As we are assuming $b \geq 2$, we have $h \in \m JK:(y,z)$. On the other hand, since we have $fx = ux^{a+b}+xv \in JK$,  we get that $x^{a+1}h = ux^{a+b}g = fxg - xvg \in \m JK$. It follows that $h \in \m JK:(x^{a+1},y,z)$. Observe that $(\m J,y,z) = (x^{a+1},y,z)$. It follows that $h \in \m JK:(x^{a+1},y,z) \subseteq \m JK:\m J = K$, because $K$ is integrally closed. Since $f$ is congruent to $ux^{a+b-1}$ modulo $(y,z)$, we have that $fg$ is congruent to $ux^{a+b-1}g = x^ah$ modulo $JK$. On the other hand, $x^a$ belongs to $(J,y,z)$, therefore $x^ah \in (J,y,z) [K \cap JK:(y,z)] \subseteq JK$.
 \end{proof}
 
 \begin{corollary}
 Let $I$ be a homogenous ideal in $Q=k[x,y,z]$ and $\m=(x,y,z)$. Assume that $I\subseteq \m^2$ and $I$ is integrally closed. Then  we have $[I:x][I:(y,z)]\subseteq I$.
\end{corollary}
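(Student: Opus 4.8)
The plan is to obtain the corollary as an immediate specialization of Lemma~\ref{inte}. That lemma is deliberately phrased to permit $J$ to be the unit ideal $Q$ --- this is the point of the parenthetical ``$J$ may not be proper'' --- so I would apply it with $J = Q$ and $K = I$. Then $I = JK = Q\cdot I = I$, and the two hypotheses of the lemma, that $K \subseteq \m^2$ and that $K$ is integrally closed, become exactly the hypotheses of the corollary, that $I \subseteq \m^2$ and that $I$ is integrally closed. The conclusion $[JK:x][JK:(y,z)] \subseteq JK$ then reads $[I:x][I:(y,z)] \subseteq I$, which is precisely the assertion.

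The only point requiring attention is that $J = Q$ is a legitimate input to Lemma~\ref{inte}, i.e.\ that its proof never secretly uses properness of $J$. Tracing that argument with $J = Q$, one has $(J,y,z) = Q$, so the integer $a$ appearing there is $0$; the passage from $fx \in JK$ to $f \in (x^{b-1},y,z)$, the construction of the auxiliary element $h$, the membership of $h$ in the relevant colon ideal obtained from integral closedness of $K$, and the final congruence modulo $JK$ all go through verbatim, with the factor $x^{a}$ simply becoming $1$. Hence I do not expect any genuine obstacle here; if a fully self-contained write-up were preferred, one could instead transcribe the proof of Lemma~\ref{inte} with the substitutions $J = Q$, $a = 0$, which only shortens it.

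Finally, although it is not part of the statement, I would point out that by the symmetry in $x$, $y$, $z$ the same conclusion holds after any permutation of the variables, so an integrally closed ideal $I \subseteq \m^2$ of $k[x,y,z]$ satisfies condition~(1) of Theorem~\ref{mainT}; it is then natural to ask whether it must also satisfy condition~(2), and hence be Golod.
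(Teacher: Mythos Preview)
Your proof is correct and is exactly the paper's own argument: the paper's entire proof is the one line ``Let $J=R$ in Lemma~\ref{inte},'' and your additional verification that $a=0$ causes no trouble is more than was required. Your closing question---whether an integrally closed $I\subseteq\m^2$ must also satisfy condition~(2) and hence be Golod---is answered negatively in the paper by Example~\ref{ex1}.
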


\begin{proof}
Let $J=R$ in Lemma \ref{inte}.
\end{proof}

 
Unfortunately, one can use other necessary colon criteria provided in \ref{nec} to show that even integrally closed {\it monomial} ideal in three variables or product of them in four variables may not be Golod. 

\begin{example} \label{ex1}
Let $I = \overline{(x^2, y^4, z^4, yz)} = (x^2,y^4,z^4, xz^2, yz, xy^2)$ in $Q=k[x,y,z]$. Then $xz \in (I:x)(I:y)$ but it is not in $z[I:(x,y)]+I$. So $I$ cannot be Golod by \ref{nec}. If one does not want to restrict to $\m$-primary ideals, a simpler example of an integrally closed ideal in $k[x,y,z]$ that is not Golod is $I=(x^2,yz)$. Indeed, this ideal fails again the condition $(I:x)(I:y) \subseteq z[I:(x,y)]+I$ of Lemma \ref{nec}; moreover, it is a complete intersection of height two.
\end{example}
\begin{example} \label{ex2} Let $J=  \overline{(x^2, y^4, z^2, yz)}$ and $K = \overline{(x^4, y^2, w^2, xw)}$ in $Q=k[x,y,z,w]$. Using Macaulay 2 \cite{M2}, one can check that $JK$ is not Golod.
\end{example}
Note that both Examples \ref{ex1} and \ref{ex2} are in the smallest possible number of variables.

To end this paper, we pose some intriguing question motivated by our work. The obvious one is:

\begin{question}\label{ques1}
Let $J,K$ be proper homogeneous ideals in $Q=k[x,y,z]$. Is $JK$ Golod? 
\end{question}

We do not know the answer to Question \ref{ques1} even when $K=\m$. One can show that the conclusions of Lemma \ref{jk1} and \ref{jk2} still hold when $K=\m$ and $J$ is any proper homogeneous ideal in $Q$, so Proposition \ref{nec} does not provide any obstructions in this case. When the characteristic of $k$ is $0$ and $J=K$, the answer is positive by the main result of \cite{HH}.

Finally, we have not been able to determine whether  Lemma \ref{jk1} holds for any product $JK$ of homogeneous ideals in three variables, without either the monomial condition, or assuming that $K \subseteq \m^2$ is integrally closed. Observe that any example for which the lemma fails would provide a negative answer to Question \ref{ques1}. It is rather frustrating that such a simple-looking question cannot be resolved, so the first author is willing to offer a cash prize of $25$ USD for the first solver of this:

\begin{question}
Let $J,K$ be proper homogeneous ideals in $Q=k[x,y,z]$. Is this true that $$[I:x][I:(y,z)]\subseteq I$$ for $I=JK$?
\end{question}








\bibliographystyle{alpha}
\bibliography{References}
\end{document}